\newtheorem{theorem}{Theorem}
\newtheorem{lemma}[theorem]{Lemma}
\newtheorem{remark}[theorem]{Remark}
\newtheorem{example}[theorem]{Example}
\newenvironment{keywords}{\begin{center}
\begin{minipage}[c]{13.4cm} {\bf Keywords:}} {\end{minipage}
\end{center}}
\newenvironment{msc}{\begin{center}
\begin{minipage}[c]{13.4cm} {\bf MSC 2010:}} {\end{minipage}
\end{center}}
\begin{document}

\title{Fractional variational problems\\
depending on indefinite integrals and with delay}

\author{Ricardo Almeida\footnote{ricardo.almeida@ua.pt, CIDMA -- Center for Research and Development in Mathematics and Applications,
Department of Mathematics, University of Aveiro, Portugal.}}

\date{}

\maketitle


\begin{abstract}
The aim of this paper is to exhibit a necessary and sufficient condition of optimality for functionals depending on fractional  integrals and derivatives, on indefinite integrals
and on presence of time delay. We exemplify with one example, where we find analytically the minimizer.
\end{abstract}

\begin{msc}
49K05, 49S05, 26A33, 34A08.
\end{msc}

\begin{keywords}
calculus of variations, fractional calculus, Caputo derivatives, time delay.
\end{keywords}


\section{Introduction}\label{sec:intro}
In this paper we proceed the work started in \cite{Almeida5}, where the authors studied fractional variational problems with the Lagrangian containing not only
 fractional integrals and fractional derivatives, but an indefinite integral as well. With this approach, we tried not only to obtain new results but also
 generalize some already known. The novelty of this paper is that we consider dependence on time delay in the cost functional. Since fractional derivatives are
 characterized by retaining memory, it is natural to state the system at an earlier time and many phenomena have time delays inherent in them. This is a field under
 strong research, namely for optimal control problems, differential equations, biology, etc (see e.g. \cite{Chen,Dehghan,Liu0,Liu,Mo,Udaltsov,Xu,Zhu}).
 For some literature on what this paper concerns, we suggest the reader to \cite{AGRA1,Almeida,Baleanu1,Bhrawy,Chen2,Gastao0,Loghmani,Malinowska,Mozyrska,Yueqiang} for fractional variational problems dealing with Caputo derivative, in \cite{Almeida1} for Lagrangians depending on fractional integrals, and in \cite{Gregory,Nat} when presence of indefinite integrals.
 For a standard variational approach to systems in presence of time delay or more general topics, we suggest the interested reader to the papers \cite{AGRA0,Rosenblueth1,Rosenblueth2,Wang}, and for the fractional
 approach to \cite{Baleanu,Jarad}.

 The paper is organized in the following way. For the reader's convenience, in section \ref{sec:frac} we recall some definitions and results on
 fractional calculus; namely the definitions of fractional integral and fractional derivative, and some fractional integration by parts formulas.
 Section \ref{sec:ELequation} is the main core of the paper: we exhibit a necessary and sufficient condition of optimality for the functional that we purpose to study in  this paper.

\section{Review on fractional calculus}\label{sec:frac}

Let  us  now  explain  the  notation  used. For more, see e.g. \cite{Kilbas,Miller,samko}.

Given a function $f:[a,b]\to\mathbb{R}$, $\alpha\in(0,1)$ and $\beta>0$, the left and right fractional integrals of order $\beta$ of $f$ are respectively
$${_aI_x^\beta}f(x)=\frac{1}{\Gamma(\beta)}\int_a^x (x-t)^{\beta-1}f(t)dt,$$
and
$${_xI_b^\beta}f(x)=\frac{1}{\Gamma(\beta)}\int_x^b(t-x)^{\beta-1} f(t)dt.$$
The left and right Riemann--Liouville fractional derivatives of order $\alpha$ of $f$ are respectively
$${_aD_x^\alpha}f(x)=\frac{1}{\Gamma(1-\alpha)}\frac{d}{dx}\int_a^x(x-t)^{-\alpha}f(t)dt$$
and
$${_xD_b^\alpha}f(x)=\frac{-1}{\Gamma(1-\alpha)}\frac{d}{dx}\int_x^b (t-x)^{-\alpha} f(t)dt.$$
The left and right Caputo fractional derivatives of order $\alpha$ of $f$ are respectively
$${_a^CD_x^\alpha}f(x)=\frac{1}{\Gamma(1-\alpha)}\int_a^x (x-t)^{-\alpha}\frac{d}{dt}f(t)dt$$
and
$${_x^CD_b^\alpha}f(x)=\frac{-1}{\Gamma(1-\alpha)}\int_x^b(t-x)^{-\alpha}\frac{d}{dt} f(t)dt.$$
It is obvious that these operators are linear, and in some sense fractional differentiation and fractional integration are inverse operations.

Caputo fractional derivative seems to be more natural than the Riemann-Liouville fractional derivative. There are two main reasons for that.
The first one is that the Caputo derivative of a constant is zero, while the  Riemann-Liouville  derivative of $f(x)=C$ is $C(x-a)^{-\alpha}/\Gamma(1-\alpha)$. The second one is that the Laplace transform of the Caputo derivative depends on the derivative of integer order of the function
$$(\mathcal{L}\,{_0^CD^\alpha_s}f)(s)=s^\alpha (\mathcal{L}\,f)(s)-\sum_{k=0}^{n-1}s^{\alpha-k-1}\frac{d^kf}{ds^k}(0),$$
in opposite to the Riemann-Liouville  derivative  that uses fractional integrals evaluated at the initial value.

A basic result needed to apply variational methods is the integration by parts formula, that in case for fractional integrals is
\begin{equation}\label{Int2}\displaystyle\int_{a}^{b}  g(x) \cdot {_aI_x^\beta}f(x)dx=\int_a^b f(x) \cdot {_x I_b^\beta} g(x)dx \, ,\end{equation}
and for Caputo fractional derivatives, we have
\begin{equation}\label{Int}\int_{a}^{b}g(x)\cdot {_a^C D_x^\alpha}f(x)dx=\int_a^b f(x)\cdot {_x D_b^\alpha} g(x)dx+\left[f(x)\cdot{_xI_b^{1-\alpha}}g(x)\right]_a^b.
\end{equation}

Formula \eqref{Int} can be generalized in a way to include the case where the lower bound of the integral is distinct of the lower bound of the Caputo derivative.

\begin{lemma}\label{LemmaInt} Let $f$ and $g$ be two functions of class $C^1$ on $[a,b]$, and let $r\in(a,b)$. Then
\begin{multline}\label{GenInt}\int_{r}^{b}g(x)\cdot {_a^C D_x^\alpha}f(x)dx=\int_r^b f(x)\cdot {_x D_b^\alpha} g(x)dx\\
-\int_a^r\frac{f(x)}{\Gamma(1-\alpha)}\, \frac{d}{dx}\left(\int_r^b (t-x)^{-\alpha}g(t)\,dt\right)dx-
\frac{f(a)}{\Gamma(1-\alpha)}\int_r^b(t-a)^{-\alpha}g(t)dt.\end{multline}
\end{lemma}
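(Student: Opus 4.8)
The plan is to reduce to formula \eqref{Int} by isolating the part of the Caputo derivative whose lower limit already lies inside $[r,b]$. For $x\in[r,b]$ one writes, using additivity of the integral in the definition of ${_a^CD_x^\alpha}f$,
\[
{_a^CD_x^\alpha}f(x)=\frac{1}{\Gamma(1-\alpha)}\int_a^r(x-t)^{-\alpha}f'(t)\,dt+{_r^CD_x^\alpha}f(x).
\]
Multiplying by $g(x)$ and integrating over $[r,b]$ splits the left-hand side of \eqref{GenInt} as $\mathrm{(I)}+\mathrm{(II)}$, where $\mathrm{(I)}=\int_r^b g(x)\,{_r^CD_x^\alpha}f(x)\,dx$ and $\mathrm{(II)}=\frac{1}{\Gamma(1-\alpha)}\int_r^b g(x)\int_a^r(x-t)^{-\alpha}f'(t)\,dt\,dx$.

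For $\mathrm{(I)}$ I would apply \eqref{Int} on the interval $[r,b]$, since there both bounds of the Caputo operator equal $r$; this gives $\mathrm{(I)}=\int_r^b f(x)\,{_xD_b^\alpha}g(x)\,dx+\left[f(x)\,{_xI_b^{1-\alpha}}g(x)\right]_r^b$, and as ${_bI_b^{1-\alpha}}g(b)=0$ the bracket reduces to $-\frac{f(r)}{\Gamma(1-\alpha)}\int_r^b(t-r)^{-\alpha}g(t)\,dt$. For $\mathrm{(II)}$ the kernel $(x-t)^{-\alpha}$ is integrable over the rectangle $[r,b]\times[a,r]$ because $\alpha\in(0,1)$, and $f',g$ are continuous, so Fubini's theorem permits interchanging the two integrations; after renaming the outer variable this yields $\mathrm{(II)}=\frac{1}{\Gamma(1-\alpha)}\int_a^r f'(x)\,h(x)\,dx$ with $h(x)=\int_r^b(t-x)^{-\alpha}g(t)\,dt$. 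Integrating by parts and noting $h(r)=\int_r^b(t-r)^{-\alpha}g(t)\,dt$, $h(a)=\int_r^b(t-a)^{-\alpha}g(t)\,dt$, one obtains $\mathrm{(II)}=\frac{f(r)}{\Gamma(1-\alpha)}\int_r^b(t-r)^{-\alpha}g(t)\,dt-\frac{f(a)}{\Gamma(1-\alpha)}\int_r^b(t-a)^{-\alpha}g(t)\,dt-\int_a^r\frac{f(x)}{\Gamma(1-\alpha)}\frac{d}{dx}\left(\int_r^b(t-x)^{-\alpha}g(t)\,dt\right)dx$. Adding $\mathrm{(I)}$ and $\mathrm{(II)}$, the two copies of $\frac{f(r)}{\Gamma(1-\alpha)}\int_r^b(t-r)^{-\alpha}g(t)\,dt$ cancel and what remains is exactly the right-hand side of \eqref{GenInt}.

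The one point deserving care is the integration by parts in $\mathrm{(II)}$: the function $h$ is of class $C^1$ on $[a,r)$, but differentiating under the integral sign shows that $h'(x)$ grows only like $(r-x)^{-\alpha}$ as $x\uparrow r$. Since $\alpha<1$ this singularity is integrable on $[a,r]$, so $h$ is absolutely continuous there and the identity $\int_a^r f'h=\left[fh\right]_a^r-\int_a^r fh'$ is justified; the same estimate shows that the last integral in \eqref{GenInt} is finite, so the statement is meaningful. Beyond this, the argument consists only of the splitting above, one invocation of \eqref{Int}, one application of Fubini, and one integration by parts, so I expect no further obstacle.
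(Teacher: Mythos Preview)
Your argument is correct, and it proceeds by a genuinely different decomposition than the paper's. The paper splits the outer integral as $\int_r^b=\int_a^b-\int_a^r$ and applies formula \eqref{Int} twice, once on $[a,b]$ and once on $[a,r]$; it then recombines the resulting right Riemann--Liouville derivatives via ${_xD_b^\alpha}g-{_xD_r^\alpha}g=-\frac{1}{\Gamma(1-\alpha)}\frac{d}{dx}\int_r^b(t-x)^{-\alpha}g(t)\,dt$ on $[a,r]$, and similarly merges the two boundary brackets. You instead split the \emph{inner} integral defining ${_a^CD_x^\alpha}f$ at $t=r$, apply \eqref{Int} only once (on $[r,b]$), and handle the leftover piece by Fubini and a classical integration by parts in the $x$-variable. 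Your route avoids manipulating differences of fractional operators and makes the cancellation of the $f(r)$-terms transparent; the paper's route is more symmetric in its use of \eqref{Int} and requires no Fubini step. Your added remark on the $(r-x)^{-\alpha}$ singularity of $h'$ and the absolute continuity of $h$ is a point the paper leaves implicit.
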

\begin{proof} It follows due the next relations:
$$\begin{array}{ll}
\displaystyle \int_{r}^{b}g(x)\cdot {_a^C D_x^\alpha}f(x)dx & =\displaystyle \int_{a}^{b}g(x)\cdot {_a^C D_x^\alpha}f(x)dx-\int_{a}^{r}g(x)\cdot {_a^C D_x^\alpha}f(x)dx\\
                    & = \displaystyle\int_{a}^{b}f(x)\cdot {_x D_b^\alpha}g(x)dx+ \left[f(x) \cdot{_xI_b^{1-\alpha}}g(x) \right]_a^b\\
                    &\quad - \displaystyle \int_{a}^{r}f(x)\cdot {_x D_r^\alpha}g(x)dx-\left[f(x)\cdot {_xI_r^{1-\alpha}}g(x) \right]_a^r\\
                     &=\displaystyle\int_{r}^{b}f(x)\cdot {_x D_b^\alpha}g(x)dx+\int_{a}^{r}f(x)\cdot \left({_x D_b^\alpha}g(x)-{_x D_r^\alpha}g(x)\right)dx\\
                     & \displaystyle\quad +\left[f(x)\cdot {_xI_b^{1-\alpha}}g(x) \right]_a^b-\left[f(x)\cdot {_xI_r^{1-\alpha}}g(x) \right]_a^r\\
                     &=\displaystyle\int_r^b f(x)\cdot {_x D_b^\alpha} g(x)dx\\
&\displaystyle-\int_a^r\frac{f(x)}{\Gamma(1-\alpha)}\, \frac{d}{dx}\left(\int_r^b (t-x)^{-\alpha}g(t)\,dt\right)dx-
\frac{f(a)}{\Gamma(1-\alpha)}\int_r^b(t-a)^{-\alpha}g(t)dt.
\end{array}$$
\end{proof}


\section{The Euler-Lagrange equation}\label{sec:ELequation}
\label{sec:ELequation}

The cost functional that we will study is given by the expression
\begin{equation}
\label{funct}
J(y)=\int_a^b L(x,y(x),{^C_aD_x^\alpha}y(x),{_aI_x^\beta}y(x),z(x), y(x-\tau), y'(x-\tau))dx,
\end{equation}
defined on $C^1[a-\tau,b]$, where
$$\left\{
\begin{array}{l}
\tau>0, \mbox{ and } \tau<b-a,\\
\alpha\in(0,1) \mbox{ and }\beta>0,\\
z(x)=\int_a^x l(t,y(t),{^C_aD_t^\alpha}y(t),{_aI_t^\beta}y(t))dt,\\
L=L(x,y,v,w,z,y_\tau,v_\tau) \mbox{ and } l=l(x,y,v,w) \mbox{ are of class } C^1
\end{array}\right.$$
and the admissible functions are such that
$$\left\{
\begin{array}{l}
{^C_aD_x^\alpha}y(x) \mbox{ and } {_aI_x^\beta}y(x) \mbox{ exist and are continuous on } [a,b],\\
y(b)=y_b\in \mathbb R,\\
y(x)=\phi(x), \mbox{ for all } x\in [a-\tau,a], \, \phi \mbox{ a fixed function.}
\end{array}\right.$$
The set of variation functions of $y$ that we will consider are those of type $y+\epsilon h$, such that $|\epsilon| \ll1$ and $h\in C^1[a-\tau,b]$ with
$$\left\{
\begin{array}{l}
h(b)=0,\\
h(x)=0, \mbox{ for all } x\in [a-\tau,a].
\end{array}\right.$$

An important result in variational calculus is the so called du Bois-Reymond Theorem:

\begin{theorem}\label{dubois} (see e.g. \cite{Brunt}) Let $f:[a,b]\to\mathbb R$ be a continuous functions, and suppose that the relation
$$\int_a^b f(x)h(x)dx=0$$
holds for every $h\in C^k[a,b]$, with $k\geq 0$. Then $f(x)=0$ on $[a,b]$.
\end{theorem}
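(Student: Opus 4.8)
The plan is to argue by contradiction via a localized test function. Suppose $f$ is not identically zero; then there is a point $x_0\in[a,b]$ with $f(x_0)\neq0$, and since replacing $f$ by $-f$ leaves the hypothesis unchanged we may assume $f(x_0)>0$. By continuity of $f$ there is a closed subinterval $[c,d]\subseteq[a,b]$ with $c<d$ and $x_0\in[c,d]$ on which $f(x)\geq f(x_0)/2>0$; when $x_0$ is interior one takes $[c,d]$ to be a two-sided neighborhood, and when $x_0$ is one of the endpoints $a$ or $b$ a one-sided neighborhood suffices.

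Next I would produce an explicit admissible test function concentrated on $[c,d]$, namely
\[
h(x)=\begin{cases}\big((x-c)(d-x)\big)^{k+1},& x\in[c,d],\\[2pt] 0,& x\in[a,b]\setminus[c,d].\end{cases}
\]
On the open interval $(c,d)$ this agrees with a polynomial, so it is smooth there, it is non-negative on all of $[a,b]$, and it is strictly positive on $(c,d)$. The only thing to check is the regularity at the junction points: near $x=c$ every derivative of $\big((x-c)(d-x)\big)^{k+1}$ of order at most $k$ retains a factor $(x-c)^m$ with $m\geq1$ and hence vanishes at $c$, and symmetrically at $d$, so that gluing with the zero function yields $h\in C^k[a,b]$. (For $k=0$ one may simply take $h(x)=(x-c)(d-x)$ on $[c,d]$ and $0$ elsewhere; note also that $h$ needs no boundary condition, since the hypothesis is assumed for all of $C^k[a,b]$.)

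Finally, substituting this $h$ into the assumed identity yields a contradiction: because $h$ vanishes outside $[c,d]$ and $f\geq f(x_0)/2$ there,
\[
0=\int_a^b f(x)h(x)\,dx=\int_c^d f(x)h(x)\,dx\geq\frac{f(x_0)}{2}\int_c^d h(x)\,dx>0 ,
\]
the final strict inequality holding because $h$ is continuous, non-negative on $[c,d]$ and positive on the nonempty open set $(c,d)$. Hence no such $x_0$ exists and $f\equiv0$ on $[a,b]$.

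I do not expect a genuine obstacle, since the statement is classical (it is the fundamental lemma of the calculus of variations). The only two points needing attention are (i) choosing the bump function in the prescribed class $C^k$ rather than merely $C^\infty$, which the exponent $k+1$ takes care of, and (ii) handling the possibility that the sign of $f$ is detected only at an endpoint of the interval, which is why a one-sided localizing interval $[c,d]$ is permitted. A mollification of an indicator function would be an equally valid route, but the polynomial bump keeps the argument elementary.
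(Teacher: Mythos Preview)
Your argument is correct and is the standard proof of the fundamental lemma of the calculus of variations. The bump function $h(x)=\big((x-c)(d-x)\big)^{k+1}$ on $[c,d]$ and $0$ elsewhere indeed lies in $C^k[a,b]$: by the Leibniz rule, for $j\le k$ the $j$-th derivative of the product at $x=c$ is a sum of terms each carrying a factor $(x-c)^{k+1-i}$ with $i\le j\le k$, hence vanishes (and symmetrically at $d$). The contradiction then follows exactly as you wrote.

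Note, however, that the paper does not actually supply its own proof of this theorem; it merely states the result and points to the reference \cite{Brunt}. So there is nothing to compare your approach against: you have filled in what the paper treats as a quoted classical fact. Your write-up would serve perfectly well as a self-contained proof if one were desired.
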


Theorem \ref{dubois} still holds if we impose the auxiliary conditions $h(a)=h(b)=0$.

From now on, to simplify writing, by $[y](x)$ and $\{y\}(x)$ we denote the vectors
$$[y](x)=(x,y(x),{^C_aD_x^\alpha}y(x),{_aI_x^\beta}y(x),z(x), y(x-\tau), y'(x-\tau))\quad \mbox{and}\quad \{y\}(x)=(x,y(x),{^C_aD_x^\alpha}y(x),{_aI_x^\beta}y(x)).$$
Let $y$ be a minimizer or maximizer of $J$ as in \eqref{funct}. As it is known, at the extremizers of the functional we have
$$\frac{d}{d\epsilon}J(y+\epsilon h)=0,$$
where $y+\epsilon h$ is any variation of $y$. Proceeding with the necessary calculations, we deduce that
\begin{multline*}
\int_a^b \left[ \frac{\partial L}{\partial y}[y](x)h(x)
+ \frac{\partial L}{\partial v}[y](x){^C_aD^\alpha_x}h(x)
+ \frac{\partial L}{\partial w}[y](x){_aI^\beta_x}h(x)\right.\\
\left.+\frac{\partial L}{\partial z}[y](x)\int_a^x\left(
\frac{\partial l}{\partial y}\{y\}(t)h(t)
+\frac{\partial l}{\partial v}\{y\}(t){^C_aD^\alpha_t}h(t)
+\frac{\partial l}{\partial w}\{y\}(t){_aI^\beta_t}h(t)\right)dt\right.\\
\left. +\frac{\partial L}{\partial y_\tau}[y](x)h(x-\tau)
+ \frac{\partial L}{\partial v_\tau}[y](x)h'(x-\tau)\right]dx=0.
\end{multline*}
Next, we use the following relations

\textit{R1:}
$$\int_a^b  \frac{\partial L}{\partial y}[y](x)h(x)dx=
\int_a^{b-\tau} \frac{\partial L}{\partial y}[y](x)h(x)dx+\int_{b-\tau}^b \frac{\partial L}{\partial y}[y](x)h(x)dx;$$

\textit{R2:} Since $h(a)=0$, and using formulas \eqref{Int} and \eqref{GenInt}
\begin{align*}
 \int_a^b \frac{\partial L}{\partial v}[y](x){^C_aD^\alpha_x}h(x) dx&=\int_a^{b-\tau}
                   \frac{\partial L}{\partial v}[y](x){^C_aD^\alpha_x}h(x) dx
       +\int_{b-\tau}^b \frac{\partial L}{\partial v}[y](x){^C_aD^\alpha_x}h(x) dx\\
     &=\int_a^{b-\tau} {_x D_{b-\tau}^\alpha} \left(\frac{\partial L}{\partial v}[y](x) \right)h(x)dx
     +\left[{_x I_{b-\tau}^{1-\alpha}} \left(\frac{\partial L}{\partial v}[y](x) \right)h(x)\right]_a^{b-\tau}\\
     &\quad +\int_{b-\tau}^b {_x D_{b}^\alpha} \left(\frac{\partial L}{\partial v}[y](x) \right)h(x)dx
     -\frac{h(a)}{\Gamma(1-\alpha)}\int_{b-\tau}^b(t-a)^{-\alpha}\frac{\partial L}{\partial v}[y](t)dt  \\
     &\quad -\int_a^{b-\tau}\frac{h(x)}{\Gamma(1-\alpha)}\, \frac{d}{dx}
     \left( \int_{b-\tau}^b(t-x)^{-\alpha} \frac{\partial L}{\partial v}[y](t)dt\right) dx\\
     &=\int_a^{b-\tau} \left[{_x D_{b-\tau}^\alpha} \left(\frac{\partial L}{\partial v}[y](x)\right)-
     \frac{1}{\Gamma(1-\alpha)}\, \frac{d}{dx}\left( \int_{b-\tau}^b(t-x)^{-\alpha} \frac{\partial L}{\partial v}[y](t)dt\right)\right]h(x)dx\\
     &\quad + \int_{b-\tau}^b {_x D_{b}^\alpha} \left(\frac{\partial L}{\partial v}[y](x) \right)h(x)dx;\\
\end{align*}

\textit{R3:} Using equation \eqref{Int2} and Lemma 2(b) of \cite{Baleanu},
\begin{align*}\int_a^b \frac{\partial L}{\partial w}[y](x){_aI^\beta_x}h(x) dx&=
  \int_a^{b-\tau} \frac{\partial L}{\partial w}[y](x){_aI^\beta_x}h(x) dx+\int_{b-\tau}^b \frac{\partial L}{\partial w}[y](x){_aI^\beta_x}h(x) dx\\
&= \int_a^{b-\tau} {_xI^\beta_{b-\tau}}\left(\frac{\partial L}{\partial w}[y](x)\right)h(x) dx\\
  &\quad +\int_{b-\tau}^b {_xI^\beta_b}\left(\frac{\partial L}{\partial w}[y](x)\right)h(x) dx
    +\frac{1}{\Gamma(\beta)}\int_a^{b-\tau}h(x)\left( \int_{b-\tau}^b (t-x)^{\beta-1} \frac{\partial L}{\partial w}[y](t)dt \right)dx\\
  &= \int_a^{b-\tau} \left[ {_xI^\beta_{b-\tau}}\left(\frac{\partial L}{\partial w}[y](x)\right) +
     \frac{1}{\Gamma(\beta)}  \left( \int_{b-\tau}^b (t-x)^{\beta-1} \frac{\partial L}{\partial w}[y](t)dt \right) \right]h(x) dx\\
     &\quad +\int_{b-\tau}^b {_xI^\beta_b}\left(\frac{\partial L}{\partial w}[y](x)\right)h(x) dx;
\end{align*}

\textit{R4:} Using standard integration by parts,
\begin{align*}
\int_a^b & \frac{\partial L}{\partial z}[y](x)\left(\int_a^x\frac{\partial l}{\partial y}\{y\}(t)h(t) dt \right) dx
=\int_a^b \left( -\frac{d}{dx}\int_x^b\frac{\partial L}{\partial z}[y](t)dt \right)
\left( \int_a^x \frac{\partial l}{\partial y}\{y\}(t)h(t) dt \right) dx\\
&= \int_a^b \left(\int_x^b\frac{\partial L}{\partial z}[y](t)dt \right)
\frac{\partial l}{\partial y}\{y\}(x)h(x) \, dx\\
&= \int_a^{b-\tau} \left(\int_x^b\frac{\partial L}{\partial z}[y](t)dt \right)\frac{\partial l}{\partial y}\{y\}(x)h(x) dx
+ \int_{b-\tau}^b \left(\int_x^b\frac{\partial L}{\partial z}[y](t)dt \right)\frac{\partial l}{\partial y}\{y\}(x)h(x) dx;
\end{align*}

\textit{R5:} Using standard integration by parts, formulas \eqref{Int} and \eqref{GenInt} and since $h(a)=0$,
\begin{align*}
&\int_a^b  \frac{\partial L}{\partial z}[y](x)\left(\int_a^x\frac{\partial l}{\partial v}\{y\}(t){^C_aD^\alpha_t}h(t) dt \right) dx
= \int_a^b \left( -\frac{d}{dx}\int_x^b\frac{\partial L}{\partial z}[y](t)dt \right)\left(\int_a^x \frac{\partial l}{\partial v}\{y\}(t){^C_aD^\alpha_t}h(t)dt\right)dx\\
&= \int_a^b \left(\int_x^b\frac{\partial L}{\partial z}[y](t)dt \right)\frac{\partial l}{\partial v}\{y\}(x){^C_aD^\alpha_x}h(x) dx\\
&= \int_a^{b-\tau} \left(\int_x^b\frac{\partial L}{\partial z}[y](t)dt \right)\frac{\partial l}{\partial v}\{y\}(x){^C_aD^\alpha_x}h(x)dx
  +\int_{b-\tau}^b \left(\int_x^b\frac{\partial L}{\partial z}[y](t)dt \right)\frac{\partial l}{\partial v}\{y\}(x){^C_aD^\alpha_x}h(x)dx\\
&= \int_a^{b-\tau} {_xD^\alpha_{b-\tau}}\left(\int_x^b\frac{\partial L}{\partial z}[y](t)dt\frac{\partial l}{\partial v}\{y\}(x)\right)h(x)dx
+\left[ {_xI^{1-\alpha}_{b-\tau}}\left(\int_x^b\frac{\partial L}{\partial z}[y](t)dt\frac{\partial l}{\partial v}\{y\}(x) \right) h(x) \right]_a^{b-\tau}\\
  & \quad +\int_{b-\tau}^b {_xD^\alpha_b}\left(\int_x^b\frac{\partial L}{\partial z}[y](t)dt\frac{\partial l}{\partial v}\{y\}(x)\right)h(x)dx
  -\frac{h(a)}{\Gamma(1-\alpha)}\int_{b-\tau}^b(t-a)^{-\alpha}\int_t^b \frac{\partial L}{\partial z}[y](k)dk \frac{\partial l}{\partial v}\{y\}(t)dt  \\
     &\quad -\int_a^{b-\tau}\frac{h(x)}{\Gamma(1-\alpha)}\, \frac{d}{dx}
     \left( \int_{b-\tau}^b(t-x)^{-\alpha}\int_t^b \frac{\partial L}{\partial z}[y](k)dk \frac{\partial l}{\partial v}\{y\}(t)dt \right)dx\\
     &=\int_a^{b-\tau} \left[{_xD^\alpha_{b-\tau}}\left(\int_x^b\frac{\partial L}{\partial z}[y](t)dt\frac{\partial l}{\partial v}\{y\}(x)\right)
     -\frac{1}{\Gamma(1-\alpha)} \frac{d}{dx}
     \left( \int_{b-\tau}^b(t-x)^{-\alpha}\int_t^b \frac{\partial L}{\partial z}[y](k)dk \frac{\partial l}{\partial v}\{y\}(t)dt \right)\right]h(x)dx\\
     &\quad + \int_{b-\tau}^b {_xD^\alpha_b}\left(\int_x^b\frac{\partial L}{\partial z}[y](t)dt\frac{\partial l}{\partial v}\{y\}(x)\right)h(x)dx
\end{align*}

\textit{R6:} Using standard integration by parts, equation \eqref{Int2} and Lemma 2(b) of \cite{Baleanu},
\begin{align*}
&\int_a^b \frac{\partial L}{\partial z}[y](x)\left(\int_a^x\frac{\partial l}{\partial w}\{y\}(t){_aI^\beta_t}h(t) dt \right) dx=
\int_a^b \left( -\frac{d}{dx}\int_x^b\frac{\partial L}{\partial z}[y](t)dt\right)\left(\int_a^x\frac{\partial l}{\partial w}\{y\}(t){_aI^\beta_t}h(t) dt \right) dx\\
&=\int_a^b \left( \int_x^b\frac{\partial L}{\partial z}[y](t)dt\right) \frac{\partial l}{\partial w}\{y\}(x){_aI^\beta_x}h(x) dx\\
&=\int_a^{b-\tau}  \left(\int_x^b\frac{\partial L}{\partial z}[y](t)dt\right) \frac{\partial l}{\partial w}\{y\}(x){_aI^\beta_x}h(x) dx
+\int_{b-\tau}^b  \left(\int_x^b\frac{\partial L}{\partial z}[y](t)dt\right) \frac{\partial l}{\partial w}\{y\}(x){_aI^\beta_x}h(x) dx\\
&=\int_a^{b-\tau}\left[ {_xI^\beta_{b-\tau}}\left(\int_x^b\frac{\partial L}{\partial z}[y](t)dt\frac{\partial l}{\partial w}\{y\}(x)\right)
+\frac{1}{\Gamma(\beta)}\left(  \int_{b-\tau}^b (t-x)^{\beta-1}\int_t^b  \frac{\partial L}{\partial z}[y](k)dk \, \frac{\partial l}{\partial w}\{y\}(t)dt\right)
\right]h(x) dx \\
&+ \int_{b-\tau}^b {_xI^\beta_b}\left(\int_x^b\frac{\partial L}{\partial z}[y](t)dt\frac{\partial l}{\partial w}\{y\}(x)\right)h(x)dx.
\end{align*}

\textit{R7:} Since $h(x)=0$ for all $x\in[a-\tau,a]$,
$$\int_a^b \frac{\partial L}{\partial y_\tau}[y](x)h(x-\tau)dx=\int_{a-\tau}^{b-\tau} \frac{\partial L}{\partial y_\tau}[y](x+\tau)h(x)dx
=\int_a^{b-\tau} \frac{\partial L}{\partial y_\tau}[y](x+\tau)h(x)dx$$

\textit{R8:} Since $h(x)=0$ for all $x\in[a-\tau,a]$, using standard integration by parts, we have
$$\int_a^b \frac{\partial L}{\partial v_\tau}[y](x)h'(x-\tau)dx=\int_a^{b-\tau} \frac{\partial L}{\partial v_\tau}[y](x+\tau)h'(x)dx=
\frac{\partial L}{\partial v_\tau}[y](b)h(b-\tau)-\int_a^{b-\tau} \frac{d}{dx}\left(\frac{\partial L}{\partial v_\tau}[y](x+\tau)\right)h(x)dx$$

We are now in position to obtain a necessary condition of optimality when in presence of the time delay $\tau>0$.

\begin{theorem}\label{Teo1} If $y$ is a minimizer or maximizer of $J$ as in \eqref{funct}, then $y$ is a solution of the system of equations
\begin{enumerate}
\item $\displaystyle \frac{\partial L}{\partial v_\tau}[y](b)=0$;
\item for every $x\in[a,b-\tau]$,
\begin{align*}
&\frac{\partial L}{\partial y}[y](x)+{_xD^\alpha_{b-\tau}}\left( \frac{\partial L}{\partial v}[y](x) \right)
- \frac{1}{\Gamma(1-\alpha)}\, \frac{d}{dx}\left( \int_{b-\tau}^b(t-x)^{-\alpha} \frac{\partial L}{\partial v}[y](t)dt\right)\\
&\quad +{_xI_{b-\tau}^\beta}\left(\frac{\partial L}{\partial w}[y](x)\right)
+\frac{1}{\Gamma(\beta)}  \left( \int_{b-\tau}^b (t-x)^{\beta-1} \frac{\partial L}{\partial w}[y](t)dt \right)
 +\int_x^b \frac{\partial L}{\partial z}[y](t)dt  \frac{\partial l}{\partial y}\{y\}(x)\\
&\quad+{_xD^\alpha_{b-\tau}}\left( \int_x^b \frac{\partial L}{\partial z}[y](t)dt \frac{\partial l}{\partial v}\{y\}(x)\right)
-\frac{1}{\Gamma(1-\alpha)} \frac{d}{dx}
\left( \int_{b-\tau}^b(t-x)^{-\alpha}\int_t^b \frac{\partial L}{\partial z}[y](k)dk \frac{\partial l}{\partial v}\{y\}(t)dt \right)\\
&\quad +{_xI^\beta_{b-\tau}}\left( \int_x^b \frac{\partial L}{\partial z}[y](t)dt \frac{\partial l}{\partial w}\{y\}(x)  \right)
+\frac{1}{\Gamma(\beta)}\left(  \int_{b-\tau}^b (t-x)^{\beta-1}\int_t^b  \frac{\partial L}{\partial z}[y](k)dk \, \frac{\partial l}{\partial w}\{y\}(t)dt\right)\\
& \quad +\frac{\partial L}{\partial y_\tau}[y](x+\tau) -\frac{d}{dx}\frac{\partial L}{\partial v_\tau}[y](x+\tau)=0;
\end{align*}
\item for every $x\in[b-\tau,b]$,
\begin{align*}
&\frac{\partial L}{\partial y}[y](x)+{_xD^\alpha_b}\left( \frac{\partial L}{\partial v}[y](x) \right)+{_xI_b^\beta}\left(\frac{\partial L}{\partial w}[y](x)\right)\\
&\quad +\int_x^b \frac{\partial L}{\partial z}[y](t)dt \frac{\partial l}{\partial y}\{y\}(x)
+{_xD^\alpha_b}\left( \int_x^b \frac{\partial L}{\partial z}[y](t)dt \frac{\partial l}{\partial v}\{y\}(x)\right)
+{_xI^\beta_b}\left( \int_x^b \frac{\partial L}{\partial z}[y](t)dt \frac{\partial l}{\partial w}\{y\}(x)  \right)=0.
\end{align*}
\end{enumerate}
\end{theorem}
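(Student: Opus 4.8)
The plan is to start from the first-order optimality condition $\left.\frac{d}{d\epsilon}J(y+\epsilon h)\right|_{\epsilon=0}=0$, valid for every admissible variation $h$. Differentiation under the integral sign is legitimate here because $L$ and $l$ are of class $C^1$ and all the functions involved (including ${^C_aD_x^\alpha}y$, ${_aI_x^\beta}y$ and $z$) are continuous on the relevant compact intervals; this produces exactly the multi-term integral identity displayed immediately before the relations \textit{R1}--\textit{R8}. The idea is then to rewrite each group of terms in that identity so that $h(x)$, $h(x-\tau)$, $h'(x-\tau)$, ${^C_aD_x^\alpha}h(x)$, ${_aI_x^\beta}h(x)$ and the inner integrals are all replaced by a bare factor $h(x)$ times a continuous coefficient, and so that every integral is decomposed over $[a,b-\tau]$ and $[b-\tau,b]$. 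This is precisely the content of \textit{R1}--\textit{R8}: \textit{R2} and \textit{R5} invoke the fractional integration by parts formulas \eqref{Int} and \eqref{GenInt}; \textit{R3} and \textit{R6} invoke \eqref{Int2} together with Lemma~2(b) of \cite{Baleanu}; \textit{R4}--\textit{R6} also use ordinary integration by parts and an interchange of the order of integration; \textit{R7} and \textit{R8} use the substitution $x\mapsto x-\tau$ and the fact that $h\equiv 0$ on $[a-\tau,a]$ (so in particular $h(a)=0$).

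I would next substitute \textit{R1}--\textit{R8} into that identity and collect all terms. The boundary contributions generated by \textit{R2} and \textit{R5} vanish: at the upper limit $x=b-\tau$ the quantity ${_xI_{b-\tau}^{1-\alpha}}(\,\cdot\,)$ is zero, and at the lower limit $x=a$ there is a factor $h(a)=0$; for the same reason the terms $-\frac{h(a)}{\Gamma(1-\alpha)}\int_{b-\tau}^{b}(\cdots)$ drop out. What is left is an identity of the form
\[
\int_a^{b-\tau}A(x)\,h(x)\,dx+\int_{b-\tau}^{b}B(x)\,h(x)\,dx+\frac{\partial L}{\partial v_\tau}[y](b)\,h(b-\tau)=0,
\]
holding for every $h\in C^1[a-\tau,b]$ with $h(b)=0$ and $h\equiv 0$ on $[a-\tau,a]$, where $A$ is exactly the left-hand side of the equation in item~(2) and $B$ is exactly the left-hand side of the equation in item~(3). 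Under the $C^1$ hypotheses on $L$, $l$ and $y$, both $A$ and $B$ are continuous on $[a,b-\tau]$ and on $[b-\tau,b]$ respectively.

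It then remains to read off the three conclusions by specializing $h$. First, restrict to variations $h$ that also vanish identically on $[b-\tau,b]$; then $h(b-\tau)=0$, only the first integral survives, and it vanishes for a class of $h$ that contains every $C^1$ function on $[a,b-\tau]$ vanishing near both endpoints. By the du Bois-Reymond Theorem \ref{dubois}, in the version valid under the auxiliary end-point conditions, this forces $A\equiv 0$ on $[a,b-\tau]$, which is item~(2). Using $A\equiv 0$, now restrict to variations supported in $[b-\tau,b]$ with $h(b-\tau)=h(b)=0$; the same argument applied to $\int_{b-\tau}^{b}B(x)h(x)\,dx=0$ gives $B\equiv 0$ on $[b-\tau,b]$, which is item~(3). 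Finally, with $A\equiv 0$ and $B\equiv 0$ the identity reduces to $\frac{\partial L}{\partial v_\tau}[y](b)\,h(b-\tau)=0$ for every admissible $h$; picking any admissible $h$ with $h(b-\tau)\neq 0$ yields $\frac{\partial L}{\partial v_\tau}[y](b)=0$, which is item~(1).

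The routine but somewhat delicate part is the second step: keeping the bounds straight in every fractional integration by parts — the operators change from the Caputo derivative based at $a$ to the right Riemann--Liouville derivatives ${_xD_{b-\tau}^\alpha}$ on $[a,b-\tau]$ and ${_xD_b^\alpha}$ on $[b-\tau,b]$, and similarly for the fractional integrals — and justifying the interchanges of the order of integration used in \textit{R4}--\textit{R6}. All of these are controlled by the continuity and $C^1$ assumptions in the statement, so nothing beyond careful computation is required. The only genuinely structural observation is that the junction point $b-\tau$ contributes an extra transversality-type condition, namely item~(1), on top of the two Euler--Lagrange-type equations holding on the two subintervals.
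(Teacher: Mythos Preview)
Your proposal is correct and follows essentially the same route as the paper: the paper's own proof consists of the single sentence ``It follows combining relations \textit{R1}--\textit{R8}, the arbitrariness of $h$ and from Theorem~\ref{dubois},'' and you have simply spelled out the ``arbitrariness of $h$'' step (first supporting $h$ in $[a,b-\tau]$, then in $[b-\tau,b]$, then choosing $h(b-\tau)\neq 0$) and made explicit why the boundary contributions in \textit{R2} and \textit{R5} vanish. No essential difference in approach.
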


\begin{proof} If follows combining relations \textit{R1}-\textit{R8}, the arbitrariness of $h$ and from Theorem \ref{dubois}.
\end{proof}

\begin{example}\label{example2}
Consider the function
$$y_\alpha(x)=\left\{
\begin{array}{lll}
0&\mbox{ if }& x\in[-1,0]\\
x^{\alpha+1}&\mbox{ if }& x\in[0,2].\\
\end{array}\right.$$
Then
$${^C_0D_x^\alpha}y_\alpha(x)=\Gamma(\alpha+2)x.$$
For the cost functional, let
\begin{equation}
\label{example}
J(y)=\int_0^2 ({^C_0D_x^\alpha}y(x)-\Gamma(\alpha+2)x)^2+z(x)+(y'(x-1)-y'_\alpha(x-1))^2dx,
\end{equation}
where
$$z(x)=\int_0^x (y(t)-t^{\alpha+1})^2 \, dt,$$
defined on the set $C^1[-1,2]$, under the constraints
$$\left\{
\begin{array}{l}
y(2)=2^{\alpha+1},\\
y(x)=0, \mbox{ for all } x\in [-1,0].
\end{array}\right.$$
Since $J(y)\geq0$ for all admissible functions $y$, and $J(y_\alpha)=0$, we have that $y_\alpha$ is a minimizer of $J$ and zero is its minimum value.
Equations \textit{1-3} of Theorem \ref{Teo1} applied to $J$ read as
\begin{enumerate}
\item $\displaystyle \left[y'(x-1)-y'_\alpha(x-1)\right]_{x=2}=0$;
\item for every $x\in[0,1]$,
\begin{align*}
&{_xD_1^\alpha}({^C_0D_x^\alpha}y(x)-\Gamma(\alpha+2)x)
- \frac{1}{\Gamma(1-\alpha)}\,\frac{d}{dx}\left( \int_{1}^2(t-x)^{-\alpha}({^C_0D_t^\alpha}y(t)-\Gamma(\alpha+2)t)dt\right)\\
&\quad +\int_x^21dt \, (y(x)-x^{\alpha+1})-\frac{d}{dx}\left(y'(x)-y'_\alpha(x)\right)=0\\
\end{align*}
\item for every $x\in[1,2]$,
\begin{align*}
&{_xD_2^\alpha}({^C_0D_x^\alpha}y(x)-\Gamma(\alpha+2)x)+\int_x^21dt \, (y(x)-x^{\alpha+1})=0.\\
\end{align*}
\end{enumerate}

Obviously, $y_\alpha$ is a solution for the three previous conditions 1--3.
\end{example}

\begin{remark} In \cite{Jarad} fractional variational problems in presence of Caputo derivatives and delays are considered. Since the variational functions $h$ are
chosen in such a way that take the value zero at the extrema, the Caputo and the Riemann-Liouville derivative of these functions  are equal.
Using a general integration by parts formula of \cite{Baleanu} similar to our Lemma \ref{LemmaInt}, but for Riemann-Liouville derivative, the problem of
\cite{Jarad} is solved for Caputo derivative. Here we choose to obtain the equivalent formula of  \cite{Baleanu} for the Caputo derivative.
\end{remark}

\begin{remark} Consider the case when $\alpha$ goes to 1 and $\beta$ goes to zero. If so, we obtain the standard functional derived from \eqref{funct}:
\begin{equation}
\label{funct2}
J(y)=\int_a^b L(x,y(x),y'(x),z(x), y(x-\tau), y'(x-\tau))dx,
\end{equation}
where
$$z(x)=\int_a^x l(t,y(t),y'(t))dt,$$
defined for $y\in C^1[a-\tau,b]$ satisfying the boundary conditions
$$\left\{
\begin{array}{l}
y(b)=y_b\in \mathbb R,\\
y(x)=\phi(x), \mbox{ for all } x\in [a-\tau,a].
\end{array}\right.$$
If $y$ is a minimizer or maximizer of $J$ as in \eqref{funct2}, then $y$ is a solution of the system of equations
\begin{enumerate}
\item $\displaystyle \frac{\partial L}{\partial v_\tau}[y](b)=0$;
\item for every $x\in[a,b-\tau]$,
\begin{align*}
&\frac{\partial L}{\partial y}[y](x)-\frac{d}{dx}\left( \frac{\partial L}{\partial v}[y](x) \right)
+\int_x^b \frac{\partial L}{\partial z}[y](t)dt  \frac{\partial l}{\partial y}\{y\}(x)\\
&\quad -\frac{d}{dx}\left(\int_x^b \frac{\partial L}{\partial z}[y](t)dt \frac{\partial l}{\partial v}\{y\}(x)\right)
+\frac{\partial L}{\partial y_\tau}[y](x+\tau) -\frac{d}{dx}\frac{\partial L}{\partial v_\tau}[y](x+\tau)=0;
\end{align*}
\item for every $x\in[b-\tau,b]$,
$$\frac{\partial L}{\partial y}[y](x)-\frac{d}{dx}\left(\frac{\partial L}{\partial v}[y](x) \right)
+\int_x^b \frac{\partial L}{\partial z}[y](t)dt \frac{\partial l}{\partial y}\{y\}(x)
-\frac{d}{dx}\left( \int_x^b \frac{\partial L}{\partial z}[y](t)dt \frac{\partial l}{\partial v}\{y\}(x)\right)=0.$$
\end{enumerate}
This result is apparently new also.
\end{remark}

\begin{remark} Theorem \ref{Teo1} can be generalized for functionals with several dependent variables. Let us consider
\begin{equation}
\label{funct3}
J(y_1,\ldots,y_n)=\int_a^b L(x,y_1(x),\ldots,y_n(x),{^C_aD_x^{\alpha_1}}y_1(x),\ldots,{^C_aD_x^{\alpha_n}}y_n(x),\end{equation}
$${_aI_x^{\beta_1}}y_1(x),\ldots,{_aI_x^{\beta_n}}y_n(x),z(x), y_1(x-\tau_1),\ldots,y_n(x-\tau_n), y_1'(x-\tau_1),\ldots, y_n'(x-\tau_n))dx,$$
defined on $C^1 \prod^{n}_{i=1} [a-\tau_i,b]$, where for all $i\in\{1,\ldots,n\}$,
$$\left\{
\begin{array}{l}
\tau_i>0, \mbox{ and } \tau_i<b-a,\\
\alpha_i\in(0,1) \mbox{ and }\beta_i>0,\\
z(x)=\int_a^x l(t,y_1(t),\ldots,y_n(t),{^C_aD_t^{\alpha_1}}y_1(t),\ldots,{^C_aD_t^{\alpha_n}}y_n(t),{_aI_t^{\beta_1}}y_1(t),\ldots,{_aI_t^{\beta_n}}y_n(t))dt,\\
L=L(x,y_1,\ldots,y_n,v_1,\ldots,v_n,w_1,\ldots,w_n,z,y_{\tau_1},\ldots,y_{\tau_n},v_{\tau_1},\ldots,v_{\tau_n}) \mbox{ and } \\
l=l(x,y_1,\ldots,y_n,v_1,\ldots,v_n,w_1,\ldots,w_n) \mbox{ are of class } C^1
\end{array}\right.$$
and the admissible functions are such that
$$\left\{
\begin{array}{l}
{^C_aD_x^{\alpha_i}}y_i(x) \mbox{ and } {_aI_x^{\beta_i}}y_i(x) \mbox{ exist and are continuous on } [a,b],\\
y_i(b)=y_{b_i}\in \mathbb R,\\
y_i(x)=\phi_i(x), \mbox{ for all } x\in [a-\tau_i,a], \, \phi_i \mbox{ a fixed function.}
\end{array}\right.$$
If the \textit{n}-uple function $(y_1,\ldots,y_n)$ is a minimizer or maximizer of $J$ as in \eqref{funct3},
then $(y_1,\ldots,y_n)$ is a solution of the system of equations similar to the ones of \textit{1-3} of Theorem \ref{Teo1}, replacing the variables
$$y\to y_i,\quad v\to v_i,\quad w\to w_i,\quad y_\tau\to y_{\tau_i},\quad v_\tau\to v_{\tau_i},\quad \alpha\to\alpha_i,\quad \beta\to\beta_i,\quad\tau\to\tau_i,$$
for all $i\in\{1,\dots,n\}$.
\end{remark}


\section{Sufficient condition}\label{sec:SufConditions}

Assuming some convexity conditions on the Lagrangian $L$ and on the supplementary function $l$, we may present a necessary condition that guarantees the existence
of minimizers for the problem. For convenience, recall the definition of convex and concave function. Given $k\in\{1,\ldots,n\}$ and $f:D\subseteq\mathbb{R}^n\to \mathbb{R}$ a
 function differentiable with respect to $x_k,\ldots,x_n$, we say that $f$ is convex (resp. concave) in  $(x_k,\ldots,x_n)$ if
$$f(x_1+c_1,\ldots,x_n+c_n)-f(x_1,\ldots,x_n)\geq \, (resp. \leq) \, \sum_{i=k}^n\frac{\partial f}{\partial x_i}(x_1,\ldots,x_n)c_i,$$
for all $(x_1,\ldots,x_n),(x_1+c_1,\ldots,x_n+c_n)\in D$.

\begin{theorem} Let $y$ be a function satisfying conditions 1--3 of Theorem \ref{Teo1}. If $L$ is convex in $(y,v,w,z,y_\tau,v_\tau)$ and one of the two following
conditions are met
\begin{enumerate}
\item $l$ is convex in $(y,v,w)$ and $\frac{\partial L}{\partial z}[y](x) \geq 0$ for all $x \in [a,b]$,
\item $l$ is concave in $(y,v,w)$ and $\frac{\partial L}{\partial z}[y](x) \leq 0$ for all $x \in [a,b]$,
\end{enumerate}
then $y$ is a minimizer of the functional $J$ as in \eqref{funct}.
\end{theorem}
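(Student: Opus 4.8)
The plan is to exploit convexity directly on the difference $J(y+h)-J(y)$, where $h\in C^1[a-\tau,b]$ is an admissible variation (so $h(b)=0$ and $h\equiv0$ on $[a-\tau,a]$). Writing $\Delta := J(y+h)-J(y)=\int_a^b\big(L[y+h](x)-L[y](x)\big)dx$, I would first observe that the admissible increments in the arguments of $L$ are exactly $h(x)$ in the $y$-slot, ${}^C_aD_x^\alpha h(x)$ in the $v$-slot, ${}_aI_x^\beta h(x)$ in the $w$-slot, $h(x-\tau)$ and $h'(x-\tau)$ in the delayed slots, and $\zeta(x):=z_{y+h}(x)-z_y(x)=\int_a^x\big(l\{y+h\}(t)-l\{y\}(t)\big)dt$ in the $z$-slot. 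Applying the convexity inequality for $L$ in $(y,v,w,z,y_\tau,v_\tau)$ pointwise in $x$ gives
\begin{multline*}
\Delta \ \geq\ \int_a^b\Big[\tfrac{\partial L}{\partial y}[y]\,h
+\tfrac{\partial L}{\partial v}[y]\,{}^C_aD_x^\alpha h
+\tfrac{\partial L}{\partial w}[y]\,{}_aI_x^\beta h
+\tfrac{\partial L}{\partial z}[y]\,\zeta\\
+\tfrac{\partial L}{\partial y_\tau}[y]\,h(x-\tau)
+\tfrac{\partial L}{\partial v_\tau}[y]\,h'(x-\tau)\Big]dx .
\end{multline*}

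Next I would handle the $\zeta$-term. Under hypothesis 1, $\tfrac{\partial L}{\partial z}[y](x)\geq0$ and convexity of $l$ in $(y,v,w)$ gives
$l\{y+h\}(t)-l\{y\}(t)\geq \tfrac{\partial l}{\partial y}\{y\}(t)h(t)+\tfrac{\partial l}{\partial v}\{y\}(t){}^C_aD_t^\alpha h(t)+\tfrac{\partial l}{\partial w}\{y\}(t){}_aI_t^\beta h(t)$, hence $\zeta(x)$ is bounded below by $\int_a^x$ of that linear expression; multiplying by the nonnegative factor $\tfrac{\partial L}{\partial z}[y](x)$ preserves the inequality. Under hypothesis 2 the inequality in $l$ reverses but so does the sign of $\tfrac{\partial L}{\partial z}[y]$, so the product again goes the right way. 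In either case,
$$\Delta\ \geq\ \int_a^b\Big[\tfrac{\partial L}{\partial y}[y]h+\tfrac{\partial L}{\partial v}[y]{}^C_aD_x^\alpha h+\tfrac{\partial L}{\partial w}[y]{}_aI_x^\beta h+\tfrac{\partial L}{\partial z}[y]\!\!\int_a^x\!\!\Big(\tfrac{\partial l}{\partial y}\{y\}h+\tfrac{\partial l}{\partial v}\{y\}{}^C_aD_t^\alpha h+\tfrac{\partial l}{\partial w}\{y\}{}_aI_t^\beta h\Big)dt+\tfrac{\partial L}{\partial y_\tau}[y]h(x-\tau)+\tfrac{\partial L}{\partial v_\tau}[y]h'(x-\tau)\Big]dx.$$

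The right-hand side above is precisely $\frac{d}{d\epsilon}J(y+\epsilon h)\big|_{\epsilon=0}$, i.e. the integral whose vanishing was analyzed in the derivation of Theorem \ref{Teo1}. I would then apply exactly the manipulations \textit{R1}--\textit{R8} (splitting each integral at $b-\tau$, using the integration by parts formulas \eqref{Int2}, \eqref{Int} and Lemma \ref{LemmaInt}, the shift $x\mapsto x+\tau$ in the delayed terms, and $h(a)=h(b)=0$) to rewrite this right-hand side as $\int_a^{b-\tau}(\text{expression 2})\,h(x)\,dx+\int_{b-\tau}^b(\text{expression 3})\,h(x)\,dx+\tfrac{\partial L}{\partial v_\tau}[y](b)h(b-\tau)$, where expressions 2 and 3 are the left-hand sides of conditions 2 and 3 of Theorem \ref{Teo1}. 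Since $y$ satisfies conditions 1--3, all three pieces vanish, so $\Delta\geq0$ for every admissible $h$; that is, $J(y+h)\geq J(y)$, proving $y$ is a minimizer.

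The only genuinely delicate point is the bookkeeping in the $\zeta$-step: one must be sure that the linear lower bound for $\zeta(x)$, once multiplied by $\tfrac{\partial L}{\partial z}[y](x)$ and integrated in $x$, reproduces \emph{exactly} the $z$-contributions that appear in relations \textit{R4}--\textit{R6}, with no leftover sign or boundary term; this is where hypotheses 1 and 2 are used in tandem with the sign of $\partial L/\partial z$. Everything else is a verbatim reuse of the computation already performed before Theorem \ref{Teo1}, so no new analytic estimates are needed.
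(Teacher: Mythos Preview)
Your proposal is correct and follows essentially the same route as the paper's proof: bound $J(y+h)-J(y)$ below via convexity, then apply relations \textit{R1}--\textit{R8} to rewrite the linear lower bound as the sum of the three Euler--Lagrange expressions against $h$, which vanish by hypothesis. Your write-up is in fact more explicit than the paper's, which compresses the convexity step (in particular the handling of the $z$-increment $\zeta$ through the sign hypothesis on $\partial L/\partial z$ together with convexity/concavity of $l$) into a single displayed inequality without comment.
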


\begin{proof}
Let $y+h$ be a variation of $y$. Using relations  \textit{R1}-\textit{R8} of Section \ref{sec:ELequation}, we have that
$$\begin{array}{ll}
J(y+h) - J(y) & \displaystyle\geq \int_a^{b-\tau}\left[  \frac{\partial L}{\partial y}[y](x)+{_xD^\alpha_{b-\tau}}\left( \frac{\partial L}{\partial v}[y](x) \right)
- \frac{1}{\Gamma(1-\alpha)}\, \frac{d}{dx}\left( \int_{b-\tau}^b(t-x)^{-\alpha} \frac{\partial L}{\partial v}[y](t)dt\right)\right.\\
&\displaystyle\quad+{_xI_{b-\tau}^\beta}\left(\frac{\partial L}{\partial w}[y](x)\right)+\frac{1}{\Gamma(\beta)}  \left( \int_{b-\tau}^b (t-x)^{\beta-1} \frac{\partial L}{\partial w}[y](t)dt \right)\\
&\displaystyle\quad+\int_x^b \frac{\partial L}{\partial z}[y](t)dt  \frac{\partial l}{\partial y}\{y\}(x)
+{_xD^\alpha_{b-\tau}}\left( \int_x^b \frac{\partial L}{\partial z}[y](t)dt \frac{\partial l}{\partial v}\{y\}(x)\right)\\
&\displaystyle\quad-\frac{1}{\Gamma(1-\alpha)} \frac{d}{dx}
\left( \int_{b-\tau}^b(t-x)^{-\alpha}\int_t^b \frac{\partial L}{\partial z}[y](k)dk \frac{\partial l}{\partial v}\{y\}(t)dt \right)\\
 &\displaystyle\quad+{_xI^\beta_{b-\tau}}\left( \int_x^b \frac{\partial L}{\partial z}[y](t)dt \frac{\partial l}{\partial w}\{y\}(x)  \right)
+\frac{1}{\Gamma(\beta)}\left(  \int_{b-\tau}^b (t-x)^{\beta-1}\int_t^b  \frac{\partial L}{\partial z}[y](k)dk \, \frac{\partial l}{\partial w}\{y\}(t)dt\right)\\
&\displaystyle\left.\quad +\frac{\partial L}{\partial y_\tau}[y](x+\tau) -\frac{d}{dx}\frac{\partial L}{\partial v_\tau}[y](x+\tau)\right] h(x)dx\\
&\displaystyle\quad+ \int_{b-\tau}^b\left[  \frac{\partial L}{\partial y}[y](x)+{_xD^\alpha_b}\left( \frac{\partial L}{\partial v}[y](x) \right)+{_xI_b^\beta}\left(\frac{\partial L}{\partial w}[y](x)\right)+\int_x^b \frac{\partial L}{\partial z}[y](t)dt \frac{\partial l}{\partial y}\{y\}(x)\right.\\
&\displaystyle\quad\left. +{_xD^\alpha_b}\left( \int_x^b \frac{\partial L}{\partial z}[y](t)dt \frac{\partial l}{\partial v}\{y\}(x)\right)
+{_xI^\beta_b}\left( \int_x^b \frac{\partial L}{\partial z}[y](t)dt \frac{\partial l}{\partial w}\{y\}(x)  \right)\right] h(x)dx\\
&\displaystyle\quad+ \frac{\partial L}{\partial v_\tau}[y](b)h(b-\tau)=0.\\
\end{array}$$
\end{proof}

For example, report to the example \ref{example2}. For this case,
$$L(x,y,v,w,z,y_\tau,v_\tau) =(v-\Gamma(\alpha+2)x)^2+z+(y_\tau-(\alpha+1)(x-1)^\alpha)^2\mbox{ and }l(x,y,v,w)=(y-x^{\alpha+1})^2 $$
are both convex, and $\frac{\partial L}{\partial z}[y](x) =1$. Observe that $y_\alpha$ is a solution of equations  \textit{1--3} of Theorem \ref{Teo1}, and in fact is a
minimizer of $J$.

\section{Conclusion}

The aim of the paper is to generalize the main result of \cite{Almeida5}, by considering delays in our system. Necessary conditions are proven in case the Lagrange function depends on fractional derivatives and on indefinite integral as well. For future work, we will study numerical tools to solve directly these kind of problems, avoiding to solve analytically fractional differential equations. 
 

\section*{Acknowledgements}

This work was supported by Portuguese funds through the CIDMA - Center for Research and Development in Mathematics and Applications, and the Portuguese Foundation for Science and Technology ("FCT–-Funda\c{c}\~{a}o para a Ci\^{e}ncia e a Tecnologia"), within project PEst-OE/MAT/UI4106/2014.


\end{document}